\theoremstyle{theorem}
\newtheorem{theorem}{\scshape Theorem }[section]
\theoremstyle{definition}
\numberwithin{equation}{section}
\begin{document}

\title[Umbral calculus and special polynomials]{Umbral calculus and special polynomials}

\author{Taekyun Kim$^1$}
\address{$^1$ Department of Mathematics, Kwangwoon University, Seoul 139-701, Republic of Korea.}
\email{tkkim@kw.ac.kr}

\author{Dae San Kim$^2$}
\address{$^2$ Department of Mathematics, Sogang University, Seoul 121-742, Republic of Korea.}
\email{dskim@sogang.ac.kr}

\subjclass{05A10, 05A19.}
\keywords{Bernoulli polynomial, Euler polynomial, Abel polynomial.}

\maketitle

\begin{abstract}
In this paper, we consider several special polynomials related to associated sequences of polynomials. Finally, we give some new and interesting identities of those polynomials arising from transfer formula for the associated sequences.
\end{abstract}

\section{Introduction}

In this paper, we assume that $\lambda\in{\mathbb{C}}$ with $\lambda \neq 1$. For $\alpha \in {\mathbb{R}}$, the Frobenius-Euler polynomials are defined by the generating function to be
\begin{equation}\label{1}
\left(\frac{1-\lambda}{e^t-\lambda}\right)^{\alpha}e^{xt}=\sum_{n=0} ^{\infty}H_n ^{(\alpha)} (x|\lambda)\frac{t^n}{n!},{\text{ (see [1,5,13,15,20,22,23])}}.
\end{equation}
In the special case, $x=0$, $H_n ^{(\alpha)}(0|\lambda)=H_n  ^{(\alpha)}(\lambda)$ are called the {\it{$n$-th Frobenius-Euler numbers}} of order $\alpha$. As is well known, the Bernoulli polynomials of order $\alpha$ are given by
\begin{equation}\label{2}
\left(\frac{t}{e^t-1}\right)^{\alpha} e^{xt}=\sum_{n=0} ^{\infty}B_n ^{(\alpha)} (x)\frac{t^n}{n!},{\text{ (see [2,3,4,6,14,15,19,21])}}.
\end{equation}
In the special case, $x=0$, $B_n ^{(\alpha)}(0)=B_n ^{(\alpha)}$ are called the {\it{$n$-th Bernoulli numbers}} of order $\alpha$.

For $n \geq 0$, the {\it{Stirling numbers of the second kind}} are defined by generating function to be
\begin{equation}\label{2}
(e^t-1)^n=n!\sum_{l=n} ^{\infty}S_2(l,n)\frac{t^l}{l!},{\text{ (see [8-12,17,18])}},
\end{equation}
and the {\it{Stirling numbers of the first kind}} are given by
\begin{equation}\label{3}
(x)_n=x(x-1)\cdots(x-n+1)=\sum_{l=0} ^n S_1(n,l)x^l, {\text{ (see [7,8,10,17,18])}}.
\end{equation}

Let ${\mathcal{F}}$ be the set of all formal power series in the variable $t$ over ${\mathbb{C}}$ with
\begin{equation}\label{4}
{\mathcal{F}}=\left\{ \left.f(t)=\sum_{k=0} ^{\infty} \frac{a_k}{k!} t^k~\right|~ a_k \in {\mathbb{C}} \right\}.
\end{equation}
Let ${\mathbb{P}}$ be the algebra of polynomials in the variable $x$ over ${\mathbb{C}}$ and ${\mathbb{P}}^{*}$ be the vector space of all linear functionals on ${\mathbb{P}}$. As a notation, the action of the linear functional $L$ on a polynomial $p(x)$ is denoted by $\left< L~|~p(x)\right>$. Let $f(t)=\sum_{k=0} ^{\infty} a_k\frac{t^k}{k!} \in {\mathcal{F}}$. Then we define the linear functional $f(t)$ on ${\mathbb{P}}$ by
\begin{equation}\label{5}
\left<f(t)|x^n \right>=a_n,~(n\geq 0), {\text{(see [10,12,16,17,18])}}.
\end{equation}
From \eqref{5}, we note that
\begin{equation}\label{6}
\left<t^k | x^n \right>=n! \delta_{n,k},~(n,k \geq 0),
\end{equation}
where $\delta_{n,k}$ is the Kronecker symbol (see \cite{08,10,11,17,18}).

Let $f_L(t)=\sum_{k=0} ^{\infty} \frac{\left<L|x^k\right>}{k!}t^k$. Then, by  \eqref{6}, we get $\left<f_L(t)|x^n\right>=\left<L|x^n\right>$. The map $L \mapsto f_L(t)$ is a vector space isomorphism from ${\mathbb{P}}^{*}$ onto ${\mathcal{F}}$. Henceforth, ${\mathcal{F}}$ thought of as both a formal power series and a linear functional. We shall call ${\mathcal{F}}$ the {\it{umbral algebra}}. The umbral calculus is the study of umbral algebra (see \cite{10,16,17,18}).

 The order $o(f(t))$ of the non-zero power series $f(t)$ is the smallest integer $k$ for which the coefficient of $t^k$ does not vanish (see \cite{10,11,12,17,18}). If $o(f(t))=1$, then $f(t)$ is called a {\it{delta series}}, and if $o(f(t))=0$, then $f(t)$ is called an {\it{invertible series}}. Let $o(f(t))=1$ and $o(g(t))=0$. Then there exists a unique sequence $S_n(x)$ of polynomials such that $\left<g(t)f(t)^k|S_n(x)\right>=n!\delta_{n,k}$ where $n,k \geq 0$.  The sequence $S_n(x)$ is called {\it{Sheffer sequence}} for $(g(t),f(t))$, which is denoted by $S_n(x)\sim (g(t),f(t))$. If $S_n(x)\sim\left(1,f(t)\right)$, then $S_n(x)$ is called the {\it{associated sequence}} for $f(t)$ (see \cite{10,16,17,18}). From \eqref{6}, we note that $\left. \left<e^{yt}\right| p(x)\right>=p(y)$.

 Let $f(t)\in{\mathcal{F}}$ and $p(x)\in{\mathbb{P}}$. Then we have
\begin{equation}\label{7}
f(t)=\sum_{k=0} ^{\infty} \frac{\left<f(t)|x^k\right>}{k!} t^k,~p(x)=\sum_{k=0} ^{\infty} \frac{\left<t^k|p(x)\right>}{k!}x^k,{\text{ (see [17,18])}}.
\end{equation}
From \eqref{8}, we can derive the following equation:
\begin{equation}\label{8}
p^{(k)}(0)=\left<t^k|p(x)\right>{\text{ and }}\left<1\left|p^{(k)}(x)\right.\right>=p^{(k)}(0),{\text{ (see [10,16,17,18])}}.
\end{equation}
for $k \geq 0$, by \eqref{8}, we easily see that $t^kp(x)=p^{(k)}(x)=\frac{d^kp(x)}{dx^k}$.

Let $S_n(x) \sim \left(g(t),f(t)\right)$. Then we see that
\begin{equation}\label{9}
\frac{1}{g({\bar{f}}(t))}e^{y{\bar{f}}(t)}=\sum_{k=0} ^{\infty} \frac{S_k(x)}{k!}t^k,{\text{ for all }}y \in {\mathbb{C}},
\end{equation}
where ${\bar{f}}(t)$ is the compositional inverse of $f(t)$ (see \cite{17,18}).

Let $p_n(x)\sim\left(1,f(t)\right)$, $q_n(x)\sim\left(1,g(t)\right)$. Then, the transfer formula for the associated sequence is given by
\begin{equation}\label{10}
q_n(x)=x\left(\frac{f(t)}{g(t)}\right)^nx^{-1}p_n(x),{\text{ (see [11,12,16,17,18])}}.
\end{equation}
For $n \geq 0$, $b \neq 0$, the {\it{Abel sequences}} are given by
\begin{equation}\label{11}
A_n(x;b)=x(x-bn)^{n-1}\sim\left(1,te^{bt}\right).
\end{equation}
In this paper, we consider several special polynomials related to associated sequences of polynomials. Finally, we give some new and interesting identities of those polynomials arising from transfer formula for the associated sequences.

\section{Umbral calculus and special polynomials}

From \eqref{1}, we note that
\begin{equation}\label{12}
H_n ^{(\alpha)} (x|\lambda)\sim\left(\left(\frac{e^t-\lambda}{1-\lambda}\right)^{\alpha},t\right).
\end{equation}
Thus, we get
\begin{equation}\label{13}
H_n ^{(\alpha)} (x|\lambda)=\left(\frac{1-\lambda}{e^t-\lambda}\right)^{\alpha}x^n.
\end{equation}
Let us assume that
\begin{equation}\label{14}
p_n(x)\sim\left(1,t(e^t-\lambda)\right),~q_n(x)\sim\left(1, \left(\frac{e^t-\lambda}{1-\lambda}\right)^at\right),~(a \neq 0).
\end{equation}
From $x^n \sim (1,t)$, \eqref{10} and \eqref{14}, we note that
\begin{equation}\label{15}
\begin{split}
p_n(x)&=x\left(\frac{t}{t(e^t-\lambda)}\right)^nx^{-1}x^n=\frac{x}{(1-\lambda)^n}\left(\frac{1-\lambda}{e^t-\lambda}\right)^nx^{n-1}\\
&=\frac{1}{(1-\lambda)^n}xH_{n-1} ^{(n)} (x|\lambda).
\end{split}
\end{equation}
and
\begin{equation}\label{16}
q_n(x)=x\left(\frac{1-\lambda}{e^t-\lambda}\right)^{na}x^{-1}x^n=xH_{n-1} ^{(an)} (x|\lambda).
\end{equation}
From \eqref{10}, \eqref{14}, \eqref{15} and \eqref{16}, we can derive
\begin{equation}\label{17}
\begin{split}
&\frac{1}{(1-\lambda)^n}xH_{n-1} ^{(n)} (x|\lambda)\\
=&x\left(\frac{t\left(\frac{e^t-\lambda}{1-\lambda}\right)^a}{t(e^t-\lambda)}\right)^nx^{-1}xH_{n-1} ^{(an)} (x|\lambda)\\
=&\frac{x}{(1-\lambda)^{an}}\left(e^t-\lambda\right)^{(a-1)n}H_{n-1} ^{(an)}(x|\lambda) \\
=&\frac{x}{(1-\lambda)^{an}}\sum_{l=0} ^{(a-1)n}\binom{(a-1)n}{l}(-\lambda)^{(a-1)n-l}e^{lt}H_{n-1} ^{(an)}(x|\lambda)\\
=&\frac{x}{(1-\lambda)^{an}}\sum_{l=0} ^{(a-1)n}\binom{(a-1)n}{l}(-\lambda)^{(a-1)n-l}H_{n-1} ^{(an)}(x+l|\lambda),\\
\end{split}
\end{equation}
where $a,n \in {\mathbb{N}}$. Therefore, by \eqref{17}, we obtain the following theorem.
\begin{theorem}\label{thm1}
For $a,n\in {\mathbb{N}}$, we have
\begin{equation*}
H_{n-1} ^{(n)} (x|\lambda)=\frac{1}{(1-\lambda)^{(a-1)n}}\sum_{l=0} ^{(a-1)n}\binom{(a-1)n}{l}(-\lambda)^{(a-1)n-l}H_{n-1} ^{(an)}(x+l|\lambda).
\end{equation*}
\end{theorem}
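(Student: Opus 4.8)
The plan is to realize $H_{n-1}^{(n)}(x|\lambda)$ and $H_{n-1}^{(an)}(x|\lambda)$ as normalizations of two associated sequences and then transport between them using the transfer formula \eqref{10}. Concretely, I would set $p_n(x)\sim(1,t(e^t-\lambda))$ and $q_n(x)\sim(1,((e^t-\lambda)/(1-\lambda))^a t)$ as in \eqref{14}. Starting from the standard fact $x^n\sim(1,t)$, two applications of \eqref{10} yield the closed forms \eqref{15} and \eqref{16}, namely $p_n(x)=\frac{1}{(1-\lambda)^n}xH_{n-1}^{(n)}(x|\lambda)$ and $q_n(x)=xH_{n-1}^{(an)}(x|\lambda)$. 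These two identities are the bridge: they convert the umbral objects into the Frobenius-Euler polynomials appearing in the statement.

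Next I would apply the transfer formula once more, this time directly between $p_n$ and $q_n$. Since $p_n(x)=x(g(t)/f(t))^n x^{-1}q_n(x)$ with $f(t)=t(e^t-\lambda)$ and $g(t)=((e^t-\lambda)/(1-\lambda))^a t$, the ratio collapses to $g(t)/f(t)=(e^t-\lambda)^{a-1}/(1-\lambda)^a$, so the operator reduces to $\frac{1}{(1-\lambda)^{an}}(e^t-\lambda)^{(a-1)n}$ acting on $x^{-1}q_n(x)=H_{n-1}^{(an)}(x|\lambda)$. Expanding $(e^t-\lambda)^{(a-1)n}$ by the binomial theorem produces the finite sum $\sum_{l}\binom{(a-1)n}{l}(-\lambda)^{(a-1)n-l}e^{lt}$, and interpreting each $e^{lt}$ as the shift operator $e^{lt}p(x)=p(x+l)$ (a consequence of $\langle e^{yt}|p(x)\rangle=p(y)$) turns the right-hand side into the claimed sum over shifted Frobenius-Euler polynomials, exactly as carried out in \eqref{17}.

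Finally I would equate the two expressions for $p_n(x)$ --- the closed form from \eqref{15} and the expanded form from the second transfer computation --- cancel the common factor $x$, and multiply through by $(1-\lambda)^n$ to fuse $(1-\lambda)^n/(1-\lambda)^{an}$ into $1/(1-\lambda)^{(a-1)n}$. The only point demanding genuine care is the hypothesis $a,n\in\mathbb{N}$: this guarantees that $(a-1)n$ is a nonnegative integer, so the binomial expansion is an honest finite sum of integer shifts $e^{lt}$ rather than a formal power series, which is precisely what makes the resulting identity a legitimate polynomial relation. The cancellation of $x$ is harmless, since both sides carry $x$ as an explicit polynomial factor and the remaining equality of polynomials in $x$ then holds identically.
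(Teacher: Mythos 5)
Your proposal follows the paper's own argument essentially verbatim: the same associated sequences \eqref{14}, the same intermediate closed forms \eqref{15} and \eqref{16} via the transfer formula applied to $x^n\sim(1,t)$, and the same final application of \eqref{10} between $p_n$ and $q_n$ with the ratio collapsing to $(e^t-\lambda)^{(a-1)n}/(1-\lambda)^{an}$, binomial expansion, and the shift interpretation of $e^{lt}$ as in \eqref{17}. The proof is correct and coincides with the paper's.
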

Let us consider the following associated sequences:
\begin{equation}\label{18}
\frac{1}{(1-\lambda)^n}xH_{n-1} ^{(n)} (x|\lambda)\sim\left(1,t(e^t-\lambda)\right),~p_n(x)\sim\left(1,\left(\frac{1-\lambda}{e^t-\lambda}\right)^at\right),~(a \neq 0).
\end{equation}
For $x^n \sim (1,t)$, by \eqref{10} and \eqref{18}, we get
\begin{equation}\label{19}
\begin{split}
p_n(x)&=x\left(\frac{t}{t\left(\frac{1-\lambda}{e^t-\lambda}\right)^a}\right)^nx^{-1}x^n=x\left(\frac{e^t-\lambda}{1-\lambda}\right)^{an}x^{n-1} \\
&=x\frac{1}{(1-\lambda)^{an}}\sum_{l=0} ^{an}\binom{an}{l}(-\lambda)^{an-l}(x+l)^{n-1}.
\end{split}
\end{equation}
For $n \geq 1$, by \eqref{10} and \eqref{18}, we get
\begin{equation}\label{20}
\begin{split}
p_n(x)&=x\left(\frac{t\left(e^t-\lambda\right)}{t\left(\frac{1-\lambda}{e^t-\lambda}\right)^a}\right)^nx^{-1}\frac{x}{(1-\lambda)^n}H_{n-1} ^{(n)} (x|\lambda) \\
&=x\left(\frac{1}{1-\lambda}\right)^{(a+1)n}\left(e^t-\lambda\right)^{(a+1)n}H_{n-1} ^{(n)}(x|\lambda).
\end{split}
\end{equation}
By \eqref{19} and \eqref{20}, we get
\begin{equation}\label{21}
\begin{split}
&\sum_{l=0} ^{an}\binom{an}{l}(-\lambda)^{an-l}(x+l)^{n-1} \\
=&\frac{1}{(1-\lambda)^n}\left(e^t-\lambda\right)^{(a+1)n}H_{n-1} ^{(n)}(x|\lambda) \\
=&\frac{1}{(1-\lambda)^n}\sum_{l=0} ^{(a+1)n}\binom{(a+1)n}{l}(-\lambda)^{(a+1)n-l}H_{n-1} ^{(n)} (x+l|\lambda).
\end{split}
\end{equation}
Therefore, by \eqref{21}, we obtain the following theorem.
\begin{theorem}\label{thm2}
For $n \geq 1$ and $a \in {\mathbb{Z}}_+={\mathbb{N}}\cup\left\{0\right\}$, we have
\begin{equation*}
\sum_{l=0} ^{an}\binom{an}{l} \left(-\lambda\right)^{-l}(x+l)^{n-1}= \frac{1}{(1-\lambda)^n}\sum_{l=0} ^{(a+1)n}\binom{(a+1)n}{l}(-\lambda)^{n-l}H_{n-1} ^{(n)} (x+l|\lambda).
\end{equation*}
\end{theorem}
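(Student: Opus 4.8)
The plan is to exploit the fact that a single associated sequence admits two different representations through the transfer formula \eqref{10}: one obtained by transferring from $x^n \sim (1,t)$, and another obtained by transferring from the sequence $\frac{1}{(1-\lambda)^n}xH_{n-1}^{(n)}(x|\lambda)$, which \eqref{15} already identifies as the associated sequence for $t(e^t-\lambda)$. Fixing $p_n(x) \sim \left(1, \left(\frac{1-\lambda}{e^t-\lambda}\right)^a t\right)$ as in \eqref{18}, I would compute $p_n(x)$ twice and equate the outcomes; the claimed identity then emerges after a binomial expansion and a trivial renormalization of the powers of $-\lambda$.

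First I would run the transfer formula from $x^n \sim (1,t)$. Here $f(t)=t$ and $g(t)=\left(\frac{1-\lambda}{e^t-\lambda}\right)^a t$, so $\left(f(t)/g(t)\right)^n = \left(\frac{e^t-\lambda}{1-\lambda}\right)^{an}$, and \eqref{10} gives $p_n(x) = x\left(\frac{e^t-\lambda}{1-\lambda}\right)^{an}x^{n-1}$. Expanding $(e^t-\lambda)^{an}$ by the binomial theorem and using $e^{lt}x^{n-1}=(x+l)^{n-1}$ yields the first closed form, i.e.\ the left member of \eqref{21} up to the factor $\frac{x}{(1-\lambda)^{an}}$. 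Next I would run the transfer formula from $\frac{1}{(1-\lambda)^n}xH_{n-1}^{(n)}(x|\lambda) \sim (1, t(e^t-\lambda))$. Now $f(t)=t(e^t-\lambda)$ against the same $g(t)$, so the ratio becomes $\frac{(e^t-\lambda)^{(a+1)n}}{(1-\lambda)^{an}}$, and \eqref{10} produces $p_n(x) = \frac{x}{(1-\lambda)^{(a+1)n}}(e^t-\lambda)^{(a+1)n}H_{n-1}^{(n)}(x|\lambda)$.

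Equating the two expressions for $p_n(x)$, the common prefactor $x$ cancels; expanding $(e^t-\lambda)^{(a+1)n}$ and applying the shift property $e^{lt}H_{n-1}^{(n)}(x|\lambda)=H_{n-1}^{(n)}(x+l|\lambda)$ then gives exactly \eqref{21}. The theorem is obtained by dividing both sides of \eqref{21} by $(-\lambda)^{an}$: on the left the exponent $an-l$ becomes $-l$, while on the right $(a+1)n-l$ becomes $n-l$, matching the asserted identity. I expect the main obstacle to be the careful handling of the operator conjugation $x(\cdot)x^{-1}$ appearing in the transfer formula --- in particular keeping track of the passage from $x^n$ to $x^{n-1}$ and confirming that the factor $x$ is genuinely common to both representations so that it may be cancelled. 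The boundary case $a=0$ should also be checked directly: there the left side collapses to $x^{n-1}$, while the right side reproduces it via $(e^t-\lambda)^n H_{n-1}^{(n)}(x|\lambda)=(1-\lambda)^n x^{n-1}$, which follows from \eqref{13}.
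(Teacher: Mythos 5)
Your proposal is correct and follows essentially the same route as the paper: it computes $p_n(x)\sim\bigl(1,\bigl(\tfrac{1-\lambda}{e^t-\lambda}\bigr)^a t\bigr)$ twice via the transfer formula, once from $x^n\sim(1,t)$ and once from $\tfrac{1}{(1-\lambda)^n}xH_{n-1}^{(n)}(x|\lambda)\sim(1,t(e^t-\lambda))$, exactly as in \eqref{19}--\eqref{21}, and then rescales by $(-\lambda)^{an}$. The sanity check at $a=0$ is a nice addition but not needed.
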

Let us consider the following associated sequences:
\begin{equation}\label{22}
(x)_n\sim\left(1,e^t-1\right),~xH_{n-1} ^{(an)}(x|\lambda)\sim\left(1,t\left(\frac{e^t-\lambda}{1-\lambda}\right)^a\right),~(a \neq 0).
\end{equation}
By \eqref{10} and \eqref{22}, we get
\begin{equation}\label{23}
\begin{split}
xH_{n-1} ^{(an)}(x|\lambda)&=x\left(\frac{e^t-1}{t\left(\frac{e^t-\lambda}{1-\lambda}\right)^a}\right)^nx^{-1}(x)_n \\
&=x\left(\frac{e^t-1}{t}\right)^n\left(\frac{1-\lambda}{e^t-\lambda}\right)^{an}(x-1)_{n-1}.
\end{split}
\end{equation}
Replacing $x$ by $x+1$, we have
\begin{equation}\label{24}
\begin{split}
H_{n-1} ^{(an)}(x+1|\lambda)&=\left(\frac{e^t-1}{t}\right)^n\left(\frac{1-\lambda}{e^t-\lambda}\right)^{an}\sum_{l=0} ^{n-1}S_1(n-1,l)x^l \\
&=\left(\frac{e^t-1}{t}\right)^n\sum_{l=0} ^{n-1}S_1(n-1,l)H_l ^{(an)}(x|\lambda) \\
&=\sum_{l=0} ^{n-1} \sum_{k=0} ^lS_1(n-1,l)\frac{n!}{(k+n)!}S_2(k+n,n)(l)_kH_{l-k} ^{(an)}(x|\lambda) \\
&=\sum_{l=0} ^{n-1} \sum_{k=0} ^lS_1(n-1,l)S_2(k+n,n)\frac{\binom{l}{k}}{\binom{k+n}{n}}H_{l-k} ^{(an)} (x|\lambda).
\end{split}
\end{equation}
Therefore, by \eqref{24}, we obtain the following theorem.
\begin{theorem}\label{thm3}
For $n \geq 1$, $a \in {\mathbb{Z}}_+$, we have
\begin{equation*}
H_{n-1} ^{(an)}(x+1|\lambda)=\sum_{l=0} ^{n-1} \sum_{k=0} ^lS_1(n-1,l)S_2(k+n,n)\frac{\binom{l}{k}}{\binom{k+n}{n}}H_{l-k} ^{(an)} (x|\lambda)
\end{equation*}
\end{theorem}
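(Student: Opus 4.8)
The plan is to realize both the falling factorial and the scaled Frobenius--Euler polynomial as associated sequences and then connect them through the transfer formula \eqref{10}. From \eqref{22} we have $(x)_n\sim\left(1,e^t-1\right)$, while \eqref{16} (recalled in \eqref{22}) gives $xH_{n-1}^{(an)}(x|\lambda)\sim\left(1,t\left(\frac{e^t-\lambda}{1-\lambda}\right)^a\right)$. Applying \eqref{10} with $p_n(x)=(x)_n$, $f(t)=e^t-1$ and $q_n(x)=xH_{n-1}^{(an)}(x|\lambda)$, $g(t)=t\left(\frac{e^t-\lambda}{1-\lambda}\right)^a$, and using the elementary fact $x^{-1}(x)_n=(x-1)_{n-1}$, I would arrive at the operator identity \eqref{23}.

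Next I would cancel the common factor $x$ appearing on both sides of \eqref{23}, which is legitimate because ${\mathbb{P}}$ is an integral domain, so as to isolate $H_{n-1}^{(an)}(x|\lambda)$, and then replace $x$ by $x+1$. Since $t=d/dx$ commutes with the shift $x\mapsto x+1$, this substitution only turns $(x-1)_{n-1}$ into $(x)_{n-1}$ on the right. I would then expand $(x)_{n-1}=\sum_{l=0}^{n-1}S_1(n-1,l)x^l$ by \eqref{3} and recognize $\left(\frac{1-\lambda}{e^t-\lambda}\right)^{an}x^l=H_l^{(an)}(x|\lambda)$ via \eqref{13}, so that the right-hand side collapses to $\left(\frac{e^t-1}{t}\right)^n\sum_{l=0}^{n-1}S_1(n-1,l)H_l^{(an)}(x|\lambda)$, matching the second line of \eqref{24}.

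The final step is to apply the operator $\left(\frac{e^t-1}{t}\right)^n$ to each $H_l^{(an)}(x|\lambda)$. I would expand it as a power series using the Stirling-second-kind generating function \eqref{2}, giving $\left(\frac{e^t-1}{t}\right)^n=\sum_{k\geq 0}\frac{n!}{(k+n)!}S_2(k+n,n)t^k$, and act termwise using the lowering rule $t^kH_l^{(an)}(x|\lambda)=(l)_kH_{l-k}^{(an)}(x|\lambda)$, which follows by differentiating the generating function \eqref{1} in $x$. The sum truncates at $k=l$ because $t^k$ annihilates polynomials of degree below $k$. This produces the coefficient $\frac{n!}{(k+n)!}S_2(k+n,n)(l)_k$, and the factorial rewrite $\frac{n!\,(l)_k}{(k+n)!}=\frac{l!\,n!}{(l-k)!\,(k+n)!}=\binom{l}{k}\big/\binom{k+n}{n}$ yields the stated formula. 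The main obstacle will be assembling these pieces without index errors: establishing the lowering rule for $H_l^{(an)}$, correctly truncating the inner sum at $k=l$, and carrying through the binomial simplification — each routine but easy to misindex.
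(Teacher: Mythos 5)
Your proposal is correct and follows essentially the same route as the paper: the transfer formula applied to the pair $(x)_n\sim(1,e^t-1)$ and $xH_{n-1}^{(an)}(x|\lambda)\sim\bigl(1,t\bigl(\tfrac{e^t-\lambda}{1-\lambda}\bigr)^a\bigr)$, cancellation of $x$ and the shift $x\mapsto x+1$, expansion of $(x)_{n-1}$ via $S_1$, identification of $\bigl(\tfrac{1-\lambda}{e^t-\lambda}\bigr)^{an}x^l=H_l^{(an)}(x|\lambda)$, and the expansion of $\bigl(\tfrac{e^t-1}{t}\bigr)^n$ via $S_2$ together with the lowering rule $t^kH_l^{(an)}(x|\lambda)=(l)_kH_{l-k}^{(an)}(x|\lambda)$. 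All steps, including the binomial simplification $\tfrac{n!\,(l)_k}{(k+n)!}=\binom{l}{k}/\binom{k+n}{n}$, match the paper's derivation in \eqref{23}--\eqref{24}.
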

Let
\begin{equation}\label{25}
\begin{array}{cc}
xH_{n-1} ^{(an)}(x|\lambda)\sim\left(1,t\left(\frac{e^t-\lambda}{1-\lambda}\right)^a\right),~(a \neq 0), \\
(x)_n\sim\left(1,e^t-1\right).
\end{array}
\end{equation}
Then, by \eqref{10} and \eqref{25}, we get
\begin{equation}\label{26}
\begin{split}
(x)_n&=x\left(\frac{t\left(\frac{e^t-\lambda}{1-\lambda}\right)^a}{e^t-1}\right)^nx^{-1}xH_{n-1} ^{(an)} (x|\lambda) \\
&=x\left(\frac{t}{e^t-1}\right)^n\left(\frac{e^t-\lambda}{1-\lambda}\right)^{an}H_{n-1} ^{(an)}(x|\lambda) \\
&=x\left(\frac{t}{e^t-1}\right)^nx^{n-1}\\
&=xB_{n-1} ^{(n)} (x).
\end{split}
\end{equation}
and
\begin{equation}\label{27}
(x)_n=\sum_{l=0} ^n S_1(n,l)x^l=x\sum_{l=0} ^{n-1}S_1(n,l+1)x^l,~(n\geq 1).
\end{equation}
Therefore, by \eqref{26} and \eqref{27}, we get
\begin{theorem}\label{thm4}
For $n\geq 1$, $0 \leq l \leq n-1$, we have
\begin{equation*}
S_1(n,l+1)=\binom{n-1}{l}B_{n-1-l} ^{(n)}.
\end{equation*}
\end{theorem}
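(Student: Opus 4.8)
The plan is to read off the result by comparing the two expansions of the falling factorial $(x)_n$ that have already been assembled in \eqref{26} and \eqref{27}. From the transfer-formula computation in \eqref{26} we have $(x)_n = xB_{n-1}^{(n)}(x)$, while the defining expansion \eqref{27} gives $(x)_n = x\sum_{l=0}^{n-1} S_1(n,l+1)x^l$ for $n\geq 1$. Since both right-hand sides are polynomials carrying the common factor $x$ (note $S_1(n,0)=0$ for $n\geq 1$, as $(x)_n$ has no constant term), dividing through by $x$ yields the polynomial identity
\begin{equation*}
\sum_{l=0}^{n-1} S_1(n,l+1)x^l = B_{n-1}^{(n)}(x).
\end{equation*}
The theorem then amounts to identifying the coefficient of $x^l$ on the right.

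To expand $B_{n-1}^{(n)}(x)$ in powers of $x$, I would factor the generating function \eqref{2} at order $\alpha=n$ as $\left(\frac{t}{e^t-1}\right)^n e^{xt} = \bigl(\sum_{j\geq 0} B_j^{(n)} \frac{t^j}{j!}\bigr)\bigl(\sum_{k\geq 0} x^k \frac{t^k}{k!}\bigr)$ and take the Cauchy product. Reading off the coefficient of $t^{n-1}/(n-1)!$ gives the standard binomial expansion
\begin{equation*}
B_{n-1}^{(n)}(x) = \sum_{l=0}^{n-1}\binom{n-1}{l} B_{n-1-l}^{(n)}\, x^l.
\end{equation*}
Comparing this with the previous display coefficient by coefficient produces $S_1(n,l+1) = \binom{n-1}{l} B_{n-1-l}^{(n)}$ for $0\leq l\leq n-1$, which is exactly the claim.

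Essentially all of the substantive work has already been carried out in establishing \eqref{26}: the delicate point there is that the transfer formula \eqref{10} collapses the extra factor $\left(\frac{e^t-\lambda}{1-\lambda}\right)^{an}$ against $H_{n-1}^{(an)}(x|\lambda) = \left(\frac{1-\lambda}{e^t-\lambda}\right)^{an}x^{n-1}$, leaving the clean operator $\left(\frac{t}{e^t-1}\right)^n x^{n-1}=B_{n-1}^{(n)}(x)$, independent of $a$ and $\lambda$. Given that identity, the only remaining step is the coefficient extraction, so the main obstacle is purely bookkeeping: making sure the falling-factorial expansion \eqref{27} is correctly reindexed so that the factor $x$ genuinely cancels, and that the binomial expansion of $B_{n-1}^{(n)}(x)$ is applied with matching indices.
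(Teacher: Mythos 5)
Your argument is exactly the paper's: Theorem \ref{thm4} is obtained there by equating the two expressions for $(x)_n$ in \eqref{26} and \eqref{27}, cancelling the common factor $x$, and comparing coefficients via the standard expansion $B_{n-1}^{(n)}(x)=\sum_{l=0}^{n-1}\binom{n-1}{l}B_{n-1-l}^{(n)}x^l$. Your write-up is correct and, if anything, makes explicit the coefficient extraction that the paper leaves implicit.
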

From \eqref{26}, we note that
\begin{equation}\label{28}
\left(\frac{e^t-1}{t}\right)^n(x-1)_{n-1}=(1-\lambda)^{-an}(e^t-\lambda)^{an}H_{n-1} ^{(an)}(x|\lambda),~(n \geq 1).
\end{equation}
\begin{equation}\label{29}
\begin{split}
LHS{\text{ of }}\eqref{28}&=\left(\frac{e^t-1}{t}\right)^n\sum_{l=0} ^{n-1} S_1(n-1,l),(x-1)^l \\
&=\sum_{l=0} ^{n-1}S_1(n-1,l)\sum_{k=0} ^l\frac{n!l!}{(k+n)!(l-k)!}S_2(k+n,n)(x-1)^{l-k} \\
&=\sum_{l=0} ^{n-1}S_1(n-1,l)\sum_{k=0} ^l\frac{\binom{l}{k}}{\binom{k+n}{n}}S_2(k+n,n)(x-1)^{l-k},
\end{split}
\end{equation}
and
\begin{equation}\label{30}
\begin{split}
RHS{\text{ of }}\eqref{28}&=(1-\lambda)^{-an}\sum_{l=0} ^{an}\binom{an}{l}(-\lambda)^{an-l}e^{lt}H_{n-1} ^{(an)} (x|\lambda)\\
&=(1-\lambda)^{-an}\sum_{l=0} ^{an}\binom{an}{l}(-\lambda)^{an-l}H_{n-1} ^{(an)}(x+l|\lambda).
\end{split}
\end{equation}
Therefore, by \eqref{28}, \eqref{29} and \eqref{30}, we obtain the following theorem.
\begin{theorem}\label{thm5}
For $n \geq 1$, $a \in {\mathbb{Z}}_+$, we have
\begin{equation*}
\begin{split}
&(1-\lambda)^{-an}\sum_{l=0} ^{an}\binom{an}{l}(-\lambda)^{an-l}H_{n-1} ^{(an)}(x+l|\lambda) \\
=&\sum_{l=0} ^{n-1}\sum_{k=0} ^l\frac{\binom{l}{k}}{\binom{k+n}{n}}S_1(n-1,l)S_2(k+n,n)(x-1)^{l-k}.
\end{split}
\end{equation*}
\end{theorem}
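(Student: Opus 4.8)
The plan is to prove the stated identity by establishing the single operator identity \eqref{28} and then evaluating its two sides independently; matching the results yields the theorem. The starting point is the transfer-formula computation \eqref{26}, which already gives $(x)_n = x\left(\frac{t}{e^t-1}\right)^n\left(\frac{e^t-\lambda}{1-\lambda}\right)^{an}H_{n-1}^{(an)}(x|\lambda)$. Writing $(x)_n = x(x-1)_{n-1}$ and cancelling the common factor $x$ (a genuine polynomial identity, not merely a formal operator manoeuvre) reduces this to $(x-1)_{n-1} = \left(\frac{t}{e^t-1}\right)^n\left(\frac{e^t-\lambda}{1-\lambda}\right)^{an}H_{n-1}^{(an)}(x|\lambda)$. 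Applying the invertible series $\left(\frac{e^t-1}{t}\right)^n$ to both sides and absorbing the constant $(1-\lambda)^{-an}$ then produces \eqref{28}.

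For the left-hand side of \eqref{28}, I would first expand $(x-1)_{n-1}=\sum_{l=0}^{n-1}S_1(n-1,l)(x-1)^l$ using \eqref{4}. The operator $\left(\frac{e^t-1}{t}\right)^n$ is next rewritten, via the generating function \eqref{3} for the Stirling numbers of the second kind, as $n!\sum_{j\geq 0}S_2(j+n,n)\frac{t^j}{(j+n)!}$, after the factor $t^{-n}$ cancels the lowest-order terms of $(e^t-1)^n$. Applying this termwise and using $t^j(x-1)^l=(l)_j(x-1)^{l-j}$ (a consequence of $t^k p(x)=p^{(k)}(x)$) truncates the inner sum at $k=l$, and the resulting factorial coefficient $\frac{n!\,l!}{(k+n)!(l-k)!}$ is rewritten as $\binom{l}{k}/\binom{k+n}{n}$. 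This gives exactly the double sum \eqref{29} appearing on the right-hand side of the theorem.

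For the right-hand side of \eqref{28}, I would expand $(e^t-\lambda)^{an}=\sum_{l=0}^{an}\binom{an}{l}(-\lambda)^{an-l}e^{lt}$ by the binomial theorem and invoke the shift-operator property $e^{lt}H_{n-1}^{(an)}(x|\lambda)=H_{n-1}^{(an)}(x+l|\lambda)$, which follows from $\langle e^{yt}\,|\,p(x)\rangle=p(y)$. Together with the prefactor $(1-\lambda)^{-an}$ this yields \eqref{30}, i.e. the left-hand side of the theorem. Equating \eqref{29} and \eqref{30} through the common quantity \eqref{28} then completes the argument.

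The main obstacle I anticipate is bookkeeping rather than conceptual: correctly collapsing the composite coefficient $\frac{n!\,l!}{(k+n)!(l-k)!}$ into the clean ratio $\binom{l}{k}/\binom{k+n}{n}$, and verifying that the operator $\left(\frac{e^t-1}{t}\right)^n$ genuinely lowers degree so that $t^j$ annihilates $(x-1)^l$ whenever $j>l$ — this is what justifies stopping the inner sum at $k=l$ and keeps both index ranges finite. The one manipulation that needs a word of justification beyond formal symbol-pushing is the cancellation of $x$ in passing from $(x)_n$ to $(x-1)_{n-1}$, which is legitimate precisely because both are honest polynomials in $x$ sharing the factor $x$.
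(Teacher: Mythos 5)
Your proposal is correct and follows essentially the same route as the paper: derive the operator identity \eqref{28} from \eqref{26} by cancelling the factor $x$ and applying the invertible series $\left(\frac{e^t-1}{t}\right)^n$, then expand the left side via $S_1$ and $S_2$ exactly as in \eqref{29} and the right side via the binomial theorem and the shift $e^{lt}p(x)=p(x+l)$ as in \eqref{30}. The coefficient simplification $\frac{n!\,l!}{(k+n)!(l-k)!}=\binom{l}{k}/\binom{k+n}{n}$ and the truncation of the inner sum at $k=l$ are handled just as the paper does.
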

Let
\begin{equation}\label{31}
p_n(x)\sim\left(1,\left(\frac{1-\lambda}{e^t-\lambda}\right)^at\right),~(x)_n\sim\left(1,e^t-1\right),~(a \neq 0).
\end{equation}
By \eqref{19}, we have
\begin{equation}\label{32}
\begin{split}
p_n(x)
&=\left(\frac{1}{1-\lambda}\right)^{an}x\sum_{l=0} ^{an}\binom{an}{l}(-\lambda)^{an-l}(x+l)^{n-1} \\
&=\left(\frac{1}{1-\lambda}\right)^{an}x\sum_{k=0} ^{an}\binom{an}{k}(-\lambda)^{an-k}\sum_{l=0} ^{n-1} \binom{n-1}{l}k^{n-1-l}x^l.
\end{split}
\end{equation}
From \eqref{10} and \eqref{31}, we have
\begin{equation}\label{33}
\begin{split}
&(x)_n =x\left(\frac{t\left(\frac{1-\lambda}{e^t-\lambda}\right)^a}{e^t-1}\right)^nx^{-1}p_n(x) \\
=&x\left(\frac{t}{e^t-1}\right)^n\left(\frac{1-\lambda}{e^t-\lambda}\right)^{an}\left(\frac{1}{1-\lambda}\right)^{an}\sum_{k=0} ^{an}\sum_{l=0} ^{n-1}\binom{an}{k}\binom{n-1}{l}k^{n-1-l}(-\lambda)^{an-k}x^l \\
=&\left(\frac{1}{1-\lambda}\right)^{an}x\left(\frac{t}{e^t-1}\right)^n\sum_{k=0} ^{an}\sum_{l=0} ^{n-1}\binom{an}{k}\binom{n-1}{l}k^{n-1-l}(-\lambda)^{an-k}H_l ^{(an)}(x|\lambda)\\
=&\left(\frac{1}{1-\lambda}\right)^{an}x\sum_{k=0} ^{an}\sum_{l=0} ^{n-1}\sum_{m=0} ^l \binom{an}{k}\binom{n-1}{l}\binom{l}{m}k^{n-1-l}(-\lambda)^{an-k}H_{l-m} ^{(an)}(\lambda)B_m ^{(n)}(x)\\
=&\left(\frac{1}{1-\lambda}\right)^{an}x\sum_{k=0} ^{an}\sum_{l=0} ^{n-1}\sum_{m=0} ^l\sum_{p=0} ^m \binom{an}{k}\binom{n-1}{l}\binom{l}{m}\binom{m}{p}k^{n-1-l}(-\lambda)^{an-k}H_{l-m} ^{(an)}(\lambda)B_{m-p} ^{(n)}x^p\\
=&\left(\frac{1}{1-\lambda}\right)^{an}x\sum_{p=0} ^{n-1}\left\{\sum_{k=0} ^{an}\sum_{l=p} ^{n-1}\sum_{m=p} ^l\binom{an}{k}\binom{n-1}{l}\binom{l}{m}\binom{m}{p}k^{n-1-l}(-\lambda)^{an-k}H_{l-m} ^{(an)}(\lambda)B_{m-p} ^{(n)}\right\}x^p.
\end{split}
\end{equation}
Therefore, by \eqref{27} and \eqref{33}, we obtain the following theorem.
\begin{theorem}\label{thm6}
For $n \geq 1$, $a \in {\mathbb{Z}}_+$ and $0 \leq p \leq n-1$, we have
\begin{equation*}
S_1(n,p+1)=\left(\frac{1}{1-\lambda}\right)^{an}\sum_{k=0} ^{an}\sum_{l=p} ^{n-1}\sum_{m=p} ^l\binom{an}{k}\binom{n-1}{l}\binom{l}{m}\binom{m}{p}k^{n-1-l}(-\lambda)^{an-k}H_{l-m} ^{(an)}(\lambda)B_{m-p} ^{(n)}.
\end{equation*}
\end{theorem}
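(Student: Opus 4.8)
The plan is to evaluate the falling factorial $(x)_n$ in two different ways via the transfer formula and then match coefficients. The crucial device is the intermediate associated sequence $p_n(x)\sim\left(1,\left(\frac{1-\lambda}{e^t-\lambda}\right)^at\right)$ introduced in \eqref{31}, which sits between the basic sequence $x^n\sim(1,t)$ and the sequence $(x)_n\sim(1,e^t-1)$.

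First I would obtain the explicit polynomial form of $p_n(x)$. Applying the transfer formula \eqref{10} to the pair $x^n\sim(1,t)$ — precisely the computation already recorded in \eqref{19} — gives
\[
p_n(x)=\left(\frac{1}{1-\lambda}\right)^{an}x\sum_{l=0}^{an}\binom{an}{l}(-\lambda)^{an-l}(x+l)^{n-1},
\]
and expanding $(x+l)^{n-1}$ by the binomial theorem recasts $x^{-1}p_n(x)$ as the explicit polynomial in $x$ displayed in \eqref{32}.

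Next I would invoke the transfer formula \eqref{10} a second time, now transporting $p_n(x)$ to $(x)_n$:
\[
(x)_n=x\left(\frac{t\left(\frac{1-\lambda}{e^t-\lambda}\right)^a}{e^t-1}\right)^nx^{-1}p_n(x)=x\left(\frac{t}{e^t-1}\right)^n\left(\frac{1-\lambda}{e^t-\lambda}\right)^{an}\bigl(x^{-1}p_n(x)\bigr).
\]
The operator has factored into a Frobenius-Euler part and a Bernoulli part. Letting $\left(\frac{1-\lambda}{e^t-\lambda}\right)^{an}$ act on each monomial $x^l$ produces $H_l^{(an)}(x|\lambda)$ by \eqref{13}; expanding through the Appell addition formula $H_l^{(an)}(x|\lambda)=\sum_m\binom{l}{m}H_{l-m}^{(an)}(\lambda)x^m$, then applying $\left(\frac{t}{e^t-1}\right)^n$ to $x^m$ to obtain $B_m^{(n)}(x)$, and finally expanding $B_m^{(n)}(x)=\sum_p\binom{m}{p}B_{m-p}^{(n)}x^p$, collapses the whole expression into the fourfold sum of \eqref{33}.

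Finally I would reorganize the nested sums to isolate the coefficient of $x^p$ and compare with the direct Stirling expansion $(x)_n=x\sum_{p=0}^{n-1}S_1(n,p+1)x^p$ from \eqref{27}. Matching the coefficient of $x^{p+1}$ yields the asserted closed form for $S_1(n,p+1)$. I expect the main obstacle to be purely bookkeeping: applying the two operator factors in the correct order, deploying the appropriate addition formulas for $H_l^{(an)}$ and $B_m^{(n)}$, and interchanging summations so that the index ranges tighten to $l\ge p$ and $m\ge p$ once the coefficient of $x^p$ is extracted.
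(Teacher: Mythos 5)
Your proposal is correct and follows exactly the paper's own route: it computes $p_n(x)$ via \eqref{19}--\eqref{32}, transports it to $(x)_n$ by the transfer formula with the operator factored as $\left(\frac{t}{e^t-1}\right)^n\left(\frac{1-\lambda}{e^t-\lambda}\right)^{an}$, expands through $H_l^{(an)}(x|\lambda)$ and $B_m^{(n)}(x)$ as in \eqref{33}, and matches coefficients against \eqref{27}. No substantive difference from the paper's argument.
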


\begin{theorem}\label{thm8}
For $n \geq 0$, we have
\begin{equation*}
e^{-xt}(e^t-\lambda)^n=\sum_{k=0} ^{\infty}\left(\sum_{l=0} ^n \sum_{j=0} ^k \binom{n}{l}(1-\lambda)^{n-l}\frac{\binom{k}{j}}{\binom{j+l}{l}}S_2(j+l,l)(-1)^{k-j}x^{k-j}\right)\frac{t^{k+l}}{k!}.
\end{equation*}
\end{theorem}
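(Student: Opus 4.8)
The plan is to bypass the umbral machinery and prove this directly as an identity of formal power series in $t$, by expanding both factors on the left and matching coefficients. The crucial first move is to rewrite the base as $e^t-\lambda=(e^t-1)+(1-\lambda)$, so that the binomial theorem gives
\begin{equation*}
(e^t-\lambda)^n=\sum_{l=0}^n\binom{n}{l}(1-\lambda)^{n-l}(e^t-1)^l.
\end{equation*}
This produces the factor $\binom{n}{l}(1-\lambda)^{n-l}$ on the right-hand side and is the one genuinely clever step; everything afterwards is bookkeeping.

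Next I would insert the defining generating function $(e^t-1)^l=l!\sum_{m\geq l}S_2(m,l)\frac{t^m}{m!}$ for the Stirling numbers of the second kind and reindex via $m=j+l$, obtaining $(e^t-1)^l=\sum_{j\geq 0}\frac{l!}{(j+l)!}S_2(j+l,l)\,t^{j+l}$. The factorial ratio collapses using $\binom{j+l}{l}=\frac{(j+l)!}{l!\,j!}$, namely $\frac{l!}{(j+l)!}=\frac{1}{\binom{j+l}{l}\,j!}$, so that
\begin{equation*}
(e^t-1)^l=\sum_{j\geq 0}\frac{S_2(j+l,l)}{\binom{j+l}{l}}\frac{t^{j+l}}{j!}.
\end{equation*}
This accounts for the weights $\frac{S_2(j+l,l)}{\binom{j+l}{l}}$ and pinpoints the origin of the exponent $t^{k+l}$: the $l$-th summand already carries an intrinsic factor $t^l$.

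Finally I would expand $e^{-xt}=\sum_{i\geq 0}\frac{(-1)^i x^i}{i!}t^i$ and, for each fixed $l$, form the Cauchy product of this series with $\sum_{j\geq 0}\frac{S_2(j+l,l)}{\binom{j+l}{l}}\frac{t^j}{j!}$ after pulling the intrinsic $t^l$ outside. The coefficient of $t^k$ in that product is the sum over $i+j=k$, i.e. $i=k-j$; the factor $\frac{1}{(k-j)!\,j!}=\frac{1}{k!}\binom{k}{j}$ is precisely what converts the convolution into the weight $\frac{\binom{k}{j}}{k!}$, while $i=k-j$ contributes $(-1)^{k-j}x^{k-j}$. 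Restoring $t^l$ turns $t^k$ into $t^{k+l}$, and multiplying back by $\binom{n}{l}(1-\lambda)^{n-l}$ and summing over $l$ yields the asserted series.

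The main obstacle is purely organizational: tracking the two independent indices (the $j$ from the Stirling expansion and the $i$ from $e^{-xt}$) through the Cauchy product and the substitution $i=k-j$, and respecting the fact that the exponent $t^{k+l}$ genuinely depends on the outer index $l$. For this reason the cleanest presentation keeps $\sum_{l=0}^n$ as the outermost summation rather than nesting it inside the coefficient bracket. No convergence questions arise, since the identity holds at the level of formal power series, and nothing beyond the binomial theorem and the Stirling generating function is required.
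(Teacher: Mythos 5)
Your proposal is correct and follows essentially the same route as the paper: the same decomposition $e^t-\lambda=(e^t-1)+(1-\lambda)$ with the binomial theorem, followed by the Stirling generating function for $(e^t-1)^l$ and a Cauchy product with $e^{-xt}$. Your remark that the sum over $l$ should sit outside the coefficient bracket (since the exponent $t^{k+l}$ depends on $l$) correctly identifies a notational slip in the stated theorem, which the paper's own proof implicitly handles the same way.
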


\begin{proof}
Note that
\begin{equation}\label{36}
e^{-xt}(e^t-\lambda)^n=e^{-xt}(e^t-1+1-\lambda)^n=\sum_{l=0} ^n\binom{n}{l}(1-\lambda)^{n-l}(e^t-1)^le^{-xt},
\end{equation}
and
\begin{equation}\label{37}
\begin{split}
(e^t-1)^le^{-xt}&=\sum_{k=0} ^{\infty}\left(\sum_{j=0} ^k \frac{l!k!}{(j+l)!(k-j)!}S_2(j+l,l)(-1)^{k-j}x^{k-j}\right)\frac{t^{k+l}}{k!}\\
&=\sum_{k=0} ^{\infty}\left(\sum_{j=0} ^k \frac{\binom{k}{j}}{\binom{j+l}{l}}S_2(j+l,l)(-1)^{k-j}x^{k-j}\right)\frac{t^{k+l}}{k!}.
\end{split}
\end{equation}
From \eqref{36} and \eqref{37}, we can derive Theorem \ref{thm8}.
\end{proof}
By (1.12) and Theorem \ref{thm8}, we get
\begin{equation}\label{38}
\begin{split}
&A_n(x;b)=x(x-bn)^{n-1}=x\left(\frac{1}{1-\lambda}\right)^{an}e^{-nbt}(e^t-\lambda)^{an}H_{n-1} ^{(an)}(x|\lambda) \\
=&\frac{x}{(1-\lambda)^{an}}\sum_{l=0} ^{an}\sum_{k=0} ^{n-1-l} \sum_{j=0} ^k \frac{\binom{an}{l}\binom{k}{j}(n-1)_{k+l}}{\binom{j+l}{l}}(1-\lambda)^{an-l}S_2(j+l,l)(-1)^{k-j}(nb)^{k-j}H_{n-1-l-k} ^{(an)} (x|\lambda).
\end{split}
\end{equation}
Therefore, by \eqref{38}, we obtain the following theorem.
\begin{theorem}\label{thm9}
For $n \geq 1$, $a \in {\mathbb{Z}}_+$ and $b \neq 0$, we have
\begin{equation*}
(x-bn)^{n-1}=\sum_{l=0} ^{an}\sum_{k=0} ^{n-1-l} \sum_{j=0} ^k \frac{\binom{an}{l}\binom{k}{j}\frac{(n-1)_{k+l}}{k!}}{\binom{j+l}{l}(1-\lambda)^l}S_2(j+l,l)(-1)^{k-j}(nb)^{k-j}H_{n-1-l-k} ^{(an)} (x|\lambda).
\end{equation*}
\end{theorem}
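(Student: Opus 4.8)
The plan is to read the displayed identity as equation \eqref{38} with the common factor $x$ cancelled, so the substance of the proof is the operator identity in the first line of \eqref{38}, namely
\begin{equation*}
x(x-bn)^{n-1}=\frac{x}{(1-\lambda)^{an}}e^{-nbt}(e^t-\lambda)^{an}H_{n-1}^{(an)}(x|\lambda),
\end{equation*}
followed by an expansion of its right-hand side. First I would obtain this operator identity directly from the umbral representation of the Frobenius-Euler polynomials. By \eqref{13} (equivalently \eqref{12}) with $\alpha=an$ one has $x^{n-1}=(1-\lambda)^{-an}(e^t-\lambda)^{an}H_{n-1}^{(an)}(x|\lambda)$, while the Abel definition \eqref{11} gives $A_n(x;b)=x(x-bn)^{n-1}=x\,e^{-nbt}x^{n-1}$, since $e^{-nbt}$ acts as the shift operator sending $x^{n-1}$ to $(x-nb)^{n-1}$. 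Substituting the first relation into the second yields the operator identity; alternatively one reaches the same formula by feeding the pair $xH_{n-1}^{(an)}(x|\lambda)\sim(1,t(\tfrac{e^t-\lambda}{1-\lambda})^a)$ from \eqref{16} and $A_n(x;b)\sim(1,te^{bt})$ from \eqref{11} into the transfer formula \eqref{10}.

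Next I would expand the right-hand side using Theorem \ref{thm8}. Applying that theorem with $n$ replaced by $an$ and with its variable $x$ replaced by the scalar $nb$ gives a power-series expression for $e^{-nbt}(e^t-\lambda)^{an}$ whose $\frac{t^{k+l}}{k!}$-coefficients carry the factors $\binom{an}{l}(1-\lambda)^{an-l}\frac{\binom{k}{j}}{\binom{j+l}{l}}S_2(j+l,l)(-1)^{k-j}(nb)^{k-j}$. I would then let this series act on $H_{n-1}^{(an)}(x|\lambda)$, using the differentiation rule from \eqref{8} together with the lowering property $t^{s}H_{n-1}^{(an)}(x|\lambda)=(n-1)_{s}H_{n-1-s}^{(an)}(x|\lambda)$, which holds because $H_m^{(an)}(x|\lambda)\sim\bigl((\tfrac{e^t-\lambda}{1-\lambda})^{an},t\bigr)$ has delta series $t$. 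With $s=k+l$ this both produces the polynomial $H_{n-1-l-k}^{(an)}(x|\lambda)$ and the falling factorial $(n-1)_{k+l}$, and it forces $k+l\le n-1$, which is precisely what truncates the $k$-sum to $0\le k\le n-1-l$. Collecting the prefactor $x(1-\lambda)^{-an}$ and combining $(1-\lambda)^{-an}(1-\lambda)^{an-l}=(1-\lambda)^{-l}$ reproduces \eqref{38}, and cancelling the common factor $x$ delivers the statement.

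The step I expect to be the main obstacle is the careful handling of the two distinct roles played by $t$ and by $x$ in Theorem \ref{thm8}. There $t$ is a formal variable and $x$ is a free symbol, whereas here $t$ must be read as the differentiation operator $d/dx$ and the $x$ of Theorem \ref{thm8} must be specialized to the constant $nb$ \emph{before} the operator acts, so that the surviving $x$ is the argument of $H_{n-1}^{(an)}(x|\lambda)$ on which $t$ differentiates. Keeping these two uses of $x$ separate, and checking that the truncation $k+l\le n-1$ coming from $(n-1)_{k+l}$ matches the stated summation ranges, is the only genuinely delicate bookkeeping; the remaining manipulations with binomial coefficients and powers of $1-\lambda$ are routine.
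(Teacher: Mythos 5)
Your proposal is correct and follows essentially the same route as the paper: equation \eqref{38} is exactly the operator identity $x(x-bn)^{n-1}=x(1-\lambda)^{-an}e^{-nbt}(e^t-\lambda)^{an}H_{n-1}^{(an)}(x|\lambda)$ expanded via Theorem \ref{thm8} with $n\mapsto an$, $x\mapsto nb$, using $t^{k+l}H_{n-1}^{(an)}(x|\lambda)=(n-1)_{k+l}H_{n-1-k-l}^{(an)}(x|\lambda)$ to truncate the $k$-sum, and then cancelling the factor $x$. You in fact supply more detail than the paper, which justifies \eqref{38} only by citing \eqref{11} and Theorem \ref{thm8}.
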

Let us consider the Changhee polynomials of the second kind as follows:
\begin{equation}\label{39}
\sum_{k=0} ^{\infty}C_k(x|\lambda)\frac{t^k}{k!}=\frac{1}{1+\lambda(1+t)}(1+t)^x.
\end{equation}
From \eqref{9} and \eqref{39}, we note that
\begin{equation}\label{40}
C_k(x|\lambda)\sim\left(1+\lambda e^t,e^t-1\right).
\end{equation}
Hence $\lambda \in {\mathbb{C}}$ with $\lambda \neq -1$.
Thus, by \eqref{40}, we get
\begin{equation}\label{41}
(1+\lambda e^t)C_n(x|\lambda)=(x)_n\sim\left(1,e^t-1\right),
\end{equation}
and
\begin{equation}\label{42}
(x)_n=x\left(\frac{t}{e^t-1}\right)^nx^{-1}x^n=x\left(\frac{t}{e^t-1}\right)^nx^{n-1}=xB_{n-1} ^{(n)} (x).
\end{equation}
Thus, by \eqref{41} and \eqref{42}, we get
\begin{equation}\label{43}
\begin{split}
C_n(x|\lambda)&=\frac{1}{\lambda e^t+1}xB_{n-1} ^{(n)} (x)=\sum_{l=0} ^n (-\lambda)^le^{lt}\left(xB_{n-1} ^{(n)} (x)\right) \\
&=\sum_{l=0} ^n (-\lambda)^l(x+l)B_{n-1} ^{(n)} (x+l).
\end{split}
\end{equation}
Let
\begin{equation}\label{44}
t_n(x|\lambda)\sim\left(1,\frac{t}{1+\lambda(1+t)}\right).
\end{equation}
Then, by \eqref{10} and \eqref{44}, we get
\begin{equation}\label{45}
\begin{split}
t_n(x|\lambda)&=x\left(\frac{t}{\frac{t}{1+\lambda(1+t)}}\right)^nx^{-1}x^n=x(1+\lambda(1+t))^nx^{n-1}\\
&=x\sum_{l=0} ^n\binom{n}{l}\lambda^l(1+t)^lx^{n-1}=x\sum_{a=0} ^n\binom{n}{a}\lambda^a\sum_{b=0} ^{n-1}\binom{a}{b}t^bx^{n-1}\\
&=\sum_{a=0} ^n\sum_{b=0} ^{n-1}\binom{n}{a}\binom{a}{b}\lambda^a(n-1)_bx^{n-b}=\sum_{a=0} ^n\sum_{b=1} ^{n}\lambda^a\binom{n}{a}\binom{a}{n-b}(n-1)_{n-b}x^b\\
&=\sum_{a=0} ^n\sum_{b=1} ^n \lambda^a\binom{n}{a}\binom{a}{n-b}\frac{(n-1)!}{(b-1)!}x^b.
\end{split}
\end{equation}
Let us also consider the following associated sequence:
\begin{equation}\label{46}
S_n(x|\mu)\sim\left(1,\frac{t}{(1+t)^{\mu}}\right),~(\mu\in{\mathbb{N}}).
\end{equation}
Then, by \eqref{10} and \eqref{46}, we easily get
\begin{equation}\label{47}
S_n(x|\mu)=\sum_{k=1} ^n \binom{\mu n}{n-k}\frac{(n-1)!}{(k-1)!}x^k.
\end{equation}
From \eqref{10}, \eqref{45} and \eqref{46}, we can derive
\begin{equation}\label{48}
\begin{split}
&S_n(x|\mu)=x\left(\frac{\frac{t}{1+\lambda(1+t)}}{\frac{t}{(1+t)^{\mu}}}\right)^nx^{-1}t_n(x|\lambda)=x\left(\frac{(1+t)^{\mu}}{1+\lambda(1+t)}\right)^nx^{-1}t_n(x|\lambda)\\
&=x\left(\sum_{l=0} ^{\infty}\frac{C_l(\mu|\lambda)}{l!}t^l\right)^nx^{-1}t_n(x|\lambda)\\
&=x\sum_{l=0} ^{\infty}\left\{\sum_{l_1+\cdots+l_n=l}\binom{l}{l_1,\ldots,l_n}\left(\prod_{i=1} ^n C_{l_i}(\mu|\lambda)\right)\right\}\frac{t^l}{l!}x^{-1}t_n(x|\lambda)\\
&=x\sum_{l=0} ^{\infty}\left\{\sum_{l_1+\cdots+l_n=l}\binom{l}{l_1,\ldots,l_n}\left(\prod_{i=1} ^n C_{l_i}(\mu|\lambda)\right)\right\}\frac{t^l}{l!} \left\{\sum_{a=0} ^n \sum_{b=1} ^n \lambda^a\binom{n}{a}\binom{a}{n-b}\frac{(n-1)!}{(b-1)!}x^{b-1}\right\} \\
&=x\sum_{a=0} ^n\sum_{b=1} ^n\sum_{l=0} ^{b-1}\sum_{l_1+\cdots+l_n=l}\binom{l}{l_1,\ldots,l_n}\left(\prod_{i=1} ^n C_{l_i}(\mu|\lambda)\right)\lambda^a\binom{n}{a}\binom{a}{n-b}\frac{(n-1)!}{(b-1)!}\frac{(b-1)_l}{l!}x^{b-1-l} \\
&=\sum_{a=0} ^n\sum_{b=1} ^n\sum_{l=0} ^{b-1}\sum_{l_1+\cdots+l_n=l}\binom{l}{l_1,\ldots,l_n}\left(\prod_{i=1} ^n C_{l_i}(\mu|\lambda)\right)\lambda^a\binom{n}{a}\binom{a}{n-b}\frac{(n-1)!}{(b-1)!}\binom{b-1}{l}x^{b-l} \\
&=\sum_{k=1} ^n \left\{\sum_{a=0} ^n\sum_{b=k} ^n\sum_{l_1+\cdots+l_n=b-k}\binom{b-k}{l_1,\ldots,l_n}\left(\prod_{i=1} ^n C_{l_i}(\mu|\lambda)\right)\lambda^a\binom{n}{a}\binom{a}{n-b}\binom{b-1}{k-1}\frac{(n-1)!}{(b-1)!}\right\}x^k.
\end{split}
\end{equation}
Therefore, by \eqref{47} and \eqref{48}, we obtain the following theorem.
\begin{theorem}\label{thm10}
For $n \geq 1$, $1 \leq k \leq n$, $b \neq 0$ and $\mu,a\in{\mathbb{Z}}_+$, we have
\begin{equation*}
\frac{\binom{\mu n}{n-k}}{(k-1)!}=\sum_{a=0} ^n\sum_{b=k} ^n\sum_{l_1+\cdots+l_n=b-k}\binom{n}{a}\binom{a}{n-b}\binom{b-k}{l_1,\ldots,l_n}\binom{b-1}{k-1}\lambda^a\frac{1}{(b-1)!}\left(\prod_{i=1} ^n C_{l_i}(\mu|\lambda)\right).
\end{equation*}
\end{theorem}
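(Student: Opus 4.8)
The plan is to compute the associated sequence $S_n(x|\mu)$ in two different ways and then compare coefficients of $x^k$. The direct computation is already available: by \eqref{47} we have $S_n(x|\mu)=\sum_{k=1}^n \binom{\mu n}{n-k}\frac{(n-1)!}{(k-1)!}x^k$, so the coefficient of $x^k$ is $\binom{\mu n}{n-k}\frac{(n-1)!}{(k-1)!}$. After dividing by $(n-1)!$ this will furnish the left-hand side of the claimed identity. The whole proof therefore reduces to producing a second, more elaborate expansion of $S_n(x|\mu)$ into powers of $x$ and reading off its $x^k$-coefficient.

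To obtain that second expansion I would use the transfer formula \eqref{10} between the two associated sequences $S_n(x|\mu)\sim\left(1,\frac{t}{(1+t)^{\mu}}\right)$ and $t_n(x|\lambda)\sim\left(1,\frac{t}{1+\lambda(1+t)}\right)$, which gives
\[
S_n(x|\mu)=x\left(\frac{(1+t)^{\mu}}{1+\lambda(1+t)}\right)^n x^{-1}t_n(x|\lambda).
\]
The key observation is that the invertible series $\frac{(1+t)^{\mu}}{1+\lambda(1+t)}$ is, by \eqref{39} with $x=\mu$, precisely the generating function $\sum_{l\geq 0}C_l(\mu|\lambda)\frac{t^l}{l!}$ of the Changhee polynomials of the second kind. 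Raising this to the $n$-th power and applying the multinomial theorem introduces the constrained sum over $l_1+\cdots+l_n=l$ together with the multinomial coefficient $\binom{l}{l_1,\ldots,l_n}$ and the product $\prod_{i=1}^n C_{l_i}(\mu|\lambda)$, as in \eqref{48}.

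Next I would feed in the explicit monomial form of $t_n(x|\lambda)$ from \eqref{45}, recalling that $t^l$ acts on a polynomial as the $l$-th derivative (see \eqref{8}), so that $\frac{t^l}{l!}x^{b-1}=\binom{b-1}{l}x^{b-1-l}$. Multiplying the Changhee power series against this monomial sum and restoring the outer factor of $x$ produces the quadruple sum over $a,b,l$ and the partition of $l$. The decisive step is the reindexing: writing the surviving power of $x$ as $x^{k}$ forces $l=b-k$, which turns $\binom{b-1}{l}$ into $\binom{b-1}{k-1}$, replaces the multinomial coefficient by $\binom{b-k}{l_1,\ldots,l_n}$, and constrains $b$ to range from $k$ to $n$. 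Collecting these, the coefficient of $x^k$ in $S_n(x|\mu)$ equals the bracketed expression displayed in \eqref{48}.

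Finally I would equate this coefficient with $\binom{\mu n}{n-k}\frac{(n-1)!}{(k-1)!}$ from \eqref{47} and cancel the common factor $(n-1)!$, which yields the stated identity. I expect the main obstacle to be purely organizational: keeping the three factorial factors $\frac{(n-1)!}{(b-1)!}$, $(b-1)_l$, and $l!$ straight through the shift $l=b-k$, and confirming that they recombine into $\frac{1}{(b-1)!}\binom{b-1}{k-1}$ without an extra stray factor, while the multinomial constraint is correctly transported from $l$ to $b-k$. No genuinely new idea beyond the transfer formula and the Changhee generating function is needed.
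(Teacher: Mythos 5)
Your proposal follows exactly the paper's own argument: the paper derives the closed form \eqref{47} for $S_n(x|\mu)$, then re-expands it in \eqref{48} via the transfer formula from $t_n(x|\lambda)$, recognizing $\frac{(1+t)^{\mu}}{1+\lambda(1+t)}$ as the Changhee generating function \eqref{39} at $x=\mu$, applying the multinomial theorem, and reindexing with $l=b-k$ before comparing coefficients of $x^k$. The approach and all the key steps, including the final cancellation of $(n-1)!$, coincide with the paper's proof.
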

{\scshape{Remark.}} From \eqref{1}, we note that
\begin{equation}\label{49}
\begin{split}
\frac{1-\lambda}{e^t-\lambda}&=\frac{1-\lambda}{e^t-1+1-\lambda}=\frac{1}{1+\frac{e^t-1}{1-\lambda}}=\sum_{l=0} ^{\infty}(-1)^l\left(\frac{e^t-1}{1-\lambda}\right)^l \\
&=\sum_{k=0} ^{\infty}\left(\sum_{l=0} ^k(-1)^l\left(\frac{1}{1-\lambda}\right)^ll!S_2(k,l)\right)\frac{t^k}{k!},
\end{split}
\end{equation}
and
\begin{equation}\label{50}
\frac{1-\lambda}{e^t-\lambda}=\sum_{n=0} ^{\infty}H_n(\lambda)\frac{t^n}{n!},
\end{equation}
where $H_n(\lambda)$ are the Frobenius-Euler numbers. By \eqref{49} and \eqref{50}, we get
\begin{equation}\label{51}
H_k(\lambda)=\sum_{l=0} ^k\left(\frac{1}{\lambda-1}\right)^ll!S_2(k,l).
\end{equation}
Let us consider the following associated sequences:
\begin{equation}\label{52}
p_n(x)\sim\left(1,\frac{1-\lambda}{e^t-\lambda}t\right),~x^n\sim(1,t).
\end{equation}
Then, by \eqref{10} and \eqref{52}, we get
\begin{equation}\label{53}
p_n(x)=x\left(\frac{1}{1-\lambda}\right)^n(e^t-\lambda)^nx^{n-1}=\left(\frac{1}{1-\lambda}\right)^nx\sum_{k=0} ^n \binom{n}{k}(-\lambda)^{n-k}(x+k)^{n-1},
\end{equation}
and
\begin{equation}\label{54}
x^n=x\left(\frac{\left(\frac{1-\lambda}{e^t-\lambda}\right)t}{t}\right)^nx^{-1}p_n(x)=x\left(\frac{1-\lambda}{e^t-\lambda}\right)^nx^{-1}p_n(x).
\end{equation}
Thus, by \eqref{53} and \eqref{54}, we get
\begin{equation*}
\begin{split}
&x^{n-1}=\sum_{l=0} ^{n-1}\left\{\sum_{l_1+\cdots+l_n=l}\binom{l}{l_1,\ldots,l_n}H_{l_1}(\lambda)\cdots H_{l_n}(\lambda)\right\}\frac{t^l}{l!}\times\left\{\sum_{k=0} ^n \binom{n}{k}(-\lambda)^{n-k} \frac{(x+k)^{n-1}}{(1-\lambda)^n}\right\} \\
&=\frac{1}{(1-\lambda)^n}\sum_{k=0} ^n \sum_{l=0} ^{n-1}\sum_{l_1+\cdots+l_n=l}\binom{l}{l_1,\ldots,l_n}\left(\prod_{i=1} ^n H_{l_i}(\lambda)\right)\binom{n}{k}\binom{n-1}{l}(-\lambda)^{n-k}(x+k)^{n-1-l}.
\end{split}
\end{equation*}

\end{document}